\theoremstyle{plain}
\newtheorem{Teorema}{Theorem}[section]
\newtheorem{Lemma}[Teorema]{Lemma}
\newtheorem{Corollario}[Teorema]{Corollary}
\newtheorem{Proposizione}[Teorema]{Proposition}
\theoremstyle{definition}
\newtheorem{Definizione}[Teorema]{Definition}
\theoremstyle{remark}
\newtheorem{Osservazione}[Teorema]{Remark}
\newtheorem{Esempio}[Teorema]{\bf Example}
\newcommand{\topo}{\operatorname{\mathbf{Top}}}
\newcommand{\topgen}{\mathcal T}
\newcommand{\gsecat}{\operatorname{{\rm Gsecat}}}
\newcommand{\wsecat}{\operatorname{{\rm Wsecat}}}
\newcommand{\cat}{\operatorname{{\mathbf{cat}}}}
\newcommand{\tc}{\operatorname{{\mathbf{TC}}}}
\newcommand{\ctc}{\operatorname{{\rm TC}}}
\newcommand{\ccat}{\operatorname{{\rm cat}}}
\newcommand{\csecat}{\operatorname{{\rm secat}}}
\newcommand{\secat}{\operatorname{{\mathbf{secat}}}}
\newcommand{\simpset}{\operatorname{{\mathcal S\mathcal S}{et}}}
\newcommand{\singular}{\operatorname{\rm Sing}}
\newcommand{\Id}{\operatorname{\rm Id}}
\title{Abstract sectional category in model structures on topological spaces}
\author{Marco Moraschini\footnote{Partially supported by the Extra project  of the University of Milano-Bicocca.}\ \ and Aniceto Murillo\footnote{Partially supported by the MINECO grant MTM2013-41768-P and  the Junta de Andaluc{\'\i}a grant FQM-213.}}
\begin{document}

\maketitle

\begin{abstract}
We study the behavior of the abstract sectional category in the Quillen, the Str{\o}m and the Mixed proper model structures on  topological spaces and prove that, under certain reasonable conditions, all of them coincide with the classical notion. As a result, the same conclusions hold for the abstract Lusternik-Schnirelmann category and the abstract topological complexity of a space.
\end{abstract}

\section*{Introduction}

It is well known that most numerical homotopy invariants of Lusternik-Schnirelmann type on topological spaces are derived from the {\em sectional category} or {\em genus} of a map, introduced by Schwarz in    \cite{sch}. For a given map $f\colon X\to Y$, its sectional category $\secat (f)$ is the smallest integer $n$  such that $Y$ can be covered by $n+1$ open subsets on each of which $f$ admits a homotopy local section.
As this invariant appears in different settings, and in order to study it under a common viewpoint, in  \cite{digargarmure} the authors introduced the abstract sectional category  in a $J$-category, for instance a (proper) closed model category satisfying the so called cube lemma (see next sections for precise definitions). This is done via two different notions, the {\em Ganea sectional category} $\gsecat(f)$ and the {\em Whitehead sectional category} $\wsecat (f)$ of a morphism $f$  on a general model category $\mathcal C$, which are shown to coincide, in a $J$-category \cite[Thm. 15]{digargarmure}. Both equivalent invariants are denoted simply by $\csecat(f)$, or $\csecat^{\mathcal C}(f)$ if we want to stress in which category we are working.   Then, in the topological setting, the classical sectional category of a map is precisely its abstract sectional category  on the model structure of topological spaces in the sense of Str{\o}m \cite{s} (see next sections for precise definitions).

 Our main goal is to show that, under sufficiently general conditions,  one can also recover this invariant from the abstract sectional category on the original Quillen model category structure \cite{qui}. Indeed, from now on, denote indistinctly by  $\topgen$ either the category $\topo$ or $\topo_*$ of free and pointed topological spaces. Also, denote  by $\topgen_S$, and $\topgen_Q$  the Str{\o}m and Quillen model structure respectively in $\topgen$. Then, we prove (see Theorem \ref{principal} for a more precise statement):

\begin{Teorema}\label{principalintro}  Let $f\colon X\to Y$ be a map in which $X,Y$ are of the homotopy type of locally compact CW-complexes. Then,
$$
\csecat^{\topgen_S}(f)=\csecat^{\topgen_Q}(f).
$$
In particular, in the free setting and with $Y$ normal, any of these invariant yields  $\secat(f)$.
\end{Teorema}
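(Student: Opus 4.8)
The plan is to use the Mixed (Cole) model structure $\topgen_M$ on $\topgen$ as a bridge between $\topgen_Q$ and $\topgen_S$, exploiting that the three structures have comparable weak equivalences and fibrations. Recall that $\topgen_M$ has the weak homotopy equivalences as weak equivalences (as in $\topgen_Q$) and the Hurewicz fibrations as fibrations (as in $\topgen_S$), that every object is fibrant, and that the cofibrant objects are precisely the spaces of the homotopy type of a CW-complex. Thus the hypothesis on $f$ places $X$ and $Y$ among the cofibrant objects of $\topgen_M$.

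First I would compare $\topgen_Q$ and $\topgen_M$. The identity functors form an adjunction $\Id\colon \topgen_Q \rightleftarrows \topgen_M \colon \Id$ whose left adjoint $\topgen_Q \to \topgen_M$ preserves cofibrations, since every Quillen cofibration has the left lifting property against trivial Hurewicz fibrations (these being a subclass of the trivial Serre fibrations), and likewise preserves trivial cofibrations. Hence it is left Quillen, and as both structures share the same weak equivalences it is in fact a Quillen equivalence. Appealing to the invariance of the abstract sectional category under such an equivalence -- the Ganea and Whitehead towers being derived constructions that descend to the common homotopy category -- this yields $\csecat^{\topgen_Q}(f)=\csecat^{\topgen_M}(f)$ for every $f$.

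Next I would compare $\topgen_M$ and $\topgen_S$ on the relevant objects. The key observation is Whitehead's theorem: between spaces of the homotopy type of a CW-complex a weak homotopy equivalence is already a genuine homotopy equivalence. Thus on the full subcategory of CW-homotopy types the weak equivalences of $\topgen_M$ and $\topgen_S$ coincide, while their fibrations agree outright (both Hurewicz), and all objects are both fibrant and cofibrant in $\topgen_S$. Since $X$ and $Y$ are cofibrant-fibrant in each structure, the Ganea sectional category may be computed by the same mapping-path factorizations and the same homotopy fibers in either case, so the iterated fiberwise-join tower produces one and the same value, giving $\csecat^{\topgen_M}(f)=\csecat^{\topgen_S}(f)$. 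Combining the two comparisons yields the asserted equality.

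The main obstacle is to verify that the fiberwise-join construction defining the Ganea tower remains within CW-homotopy types and genuinely agrees across the two structures: each stage is assembled from pullbacks and fiberwise joins, that is, pushouts of products of the relevant Hurewicz fibrations, and one must check both that the outcome stays of CW-homotopy type (so that Whitehead's theorem keeps applying at every level) and that these pushouts compute the same homotopy colimit in $\topgen_M$ and in $\topgen_S$. This is exactly where the local-compactness hypothesis enters, ensuring that the products of CW-complexes arising in the joins are again CW-complexes and that the constructions are homotopy-invariant throughout. Finally, the \emph{in particular} clause follows immediately by invoking the already-recalled identification $\csecat^{\topgen_S}(f)=\secat(f)$, valid in the free setting when $Y$ is normal.
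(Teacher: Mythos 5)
Your overall strategy is the paper's own: interpolate the mixed (Cole) structure $\topgen_M$ between $\topgen_Q$ and $\topgen_S$, handle the Quillen--mixed comparison by the identity adjunction, and handle the mixed--Str{\o}m comparison on the subcategory of CW-homotopy types. The first half is essentially correct as you state it (the paper makes your appeal to ``invariance under the equivalence'' precise via modelization functors, i.e.\ \cite[Prop.~6.5]{do} together with \cite[Cor.~26]{digargarmure}, and this step needs no hypotheses on $X$ and $Y$).

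There is, however, a genuine gap in the mixed--Str{\o}m half. Agreement of the fibrations and (by Whitehead's theorem, on CW-types) of the weak equivalences is not enough to conclude that the Ganea tower agrees: each join $A\ast_B C$ requires a $C$-factorization of $\pi_1\colon Z\times_B C\to Z$, i.e.\ a \emph{cofibration} followed by a weak equivalence, and the cofibration classes of $\topgen_M$ and $\topgen_S$ are genuinely different (closed Hurewicz cofibrations versus maps with the left lifting property against trivial Hurewicz fibrations). You correctly flag this as ``the main obstacle,'' but you then discharge it by invoking local compactness, which is the wrong tool: the paper explicitly remarks that local compactness is \emph{not} needed for the Ganea construction --- it enters only to guarantee $\gsecat=\wsecat$ (the Whitehead fat wedge involves products), hence that $\csecat^{\topgen_M}$ and $\csecat^{\topgen_Q}$ are well defined. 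What actually closes the gap is: (i) the pullback $Z\times_B C$ of a Hurewicz fibration along a map of CW-homotopy types is again of CW-homotopy type, so all objects in sight stay mixed-cofibrant; and (ii) Cole's result \cite[Cor.~3.12]{co} that a Str{\o}m cofibration between mixed-cofibrant objects is a mixed cofibration, applied to the mapping-cylinder inclusion $Z\times_B C\hookrightarrow M_{\pi_1}$. This shows that one single diagram is simultaneously a valid join construction in both structures, after which the weak-section comparison follows from Whitehead's theorem by choosing the $F$-factorization through a cofibrant object. Without step (ii), your assertion that ``the iterated fiberwise-join tower produces one and the same value'' is unsupported.
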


As a consequence, we show that these common invariants have a simplicial, and therefore, combinatorial description. Indeed, Let $\simpset$ denote  the category of either free or pointed simplicial sets. Using the classical Milnor equivalence we deduce the following (see the first assertion of Theorem \ref{ultimo} for a precise and more general statement).

\begin{Teorema}\label{uno} Let $f\colon X\to Y$ be a map in which $Y$ is normal and $X, Y$ have the homotopy type of locally compact CW-complexes. Then,
$$
\secat(f)=\csecat^{\simpset}\bigl(\singular(f)\bigr).
$$
\end{Teorema}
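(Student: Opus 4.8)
The plan is to prove Theorem \ref{uno} by bridging the topological and simplicial sides through the two invariants already identified in Theorem \ref{principalintro}, namely $\csecat^{\topgen_S}(f)$ and $\csecat^{\topgen_Q}(f)$, using the classical Milnor adjunction. By Theorem \ref{principalintro}, under the stated hypotheses (with $Y$ normal and $X,Y$ of the homotopy type of locally compact CW-complexes) we already have $\secat(f)=\csecat^{\topgen_Q}(f)$, so it suffices to identify the Quillen invariant on $\topgen$ with the abstract sectional category $\csecat^{\simpset}(\singular(f))$ computed in simplicial sets. The natural tool is the singular-realization Quillen equivalence $|\cdot|\dashv\singular$ between $\simpset$ (with the Kan--Quillen model structure) and $\topgen_Q$, which is the precise meaning of the ``Milnor equivalence'' invoked in the excerpt.

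First I would record that $\csecat$ (equivalently, $\gsecat$ or $\wsecat$ by \cite[Thm. 15]{digargarmure}) is an invariant defined purely in terms of the model structure via Ganea-type constructions and factorizations, so it is preserved by Quillen equivalences between $J$-categories. The key step is therefore to verify that a right Quillen equivalence sends the sectional category of a morphism to that of its derived image: since $\csecat$ depends only on the homotopy category together with the fibration/cofibration data needed to form Ganea fibrations and Whitehead-type cofibrations, and since a Quillen equivalence induces an equivalence of homotopy categories compatible with these derived constructions, one expects $\csecat^{\simpset}(\singular(f)) = \csecat^{\topgen_Q}(\mathbb{R}\singular(f))$ where $\mathbb{R}\singular$ is the derived functor. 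Because $\singular$ already lands in Kan complexes (it is right Quillen and every simplicial set maps to its fibrant model), $\mathbb{R}\singular(f)$ agrees up to weak equivalence with $\singular(f)$, and the invariance of $\csecat$ under weak equivalence (a standard property in the $J$-category setting) then gives the desired equality.

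Concretely, I would proceed as follows. Step one: establish, as a lemma of independent interest, that for a Quillen equivalence between proper model categories satisfying the cube lemma, the abstract sectional category is preserved by passing through the derived adjunction; this reduces to checking that the Ganea tower (respectively the join/Whitehead construction) is carried to a weakly equivalent tower by the derived right adjoint, which follows from the fact that right Quillen functors preserve fibrations and (via the equivalence) the relevant homotopy pullbacks. Step two: apply this lemma to the Milnor pair $|\cdot|\dashv\singular$, obtaining $\csecat^{\simpset}(\singular(f))=\csecat^{\topgen_Q}(f)$ directly, using that $\singular$ is fibrant-valued so no further fibrant replacement distorts the comparison. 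Step three: combine with Theorem \ref{principalintro} to conclude $\secat(f)=\csecat^{\topgen_Q}(f)=\csecat^{\simpset}(\singular(f))$.

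The main obstacle I anticipate is the rigorous verification in step one that $\csecat$ is genuinely a Quillen-equivalence invariant, rather than merely a weak-homotopy invariant within a fixed model structure. The subtlety is that $\csecat$ is defined through specific Ganea fibrations and relative constructions, and one must confirm that the derived right adjoint commutes with these constructions up to the weak equivalences that $\csecat$ does not distinguish; in particular one must ensure the cube lemma and properness are respected so that the homotopy pullbacks defining the Ganea spaces are preserved. Handling the interplay between the cofibrant-fibrant replacements on the two sides—especially guaranteeing that $\singular(f)$ and its fibrant-cofibrant replacement yield the same invariant—is where the technical care concentrates; once that naturality is in hand, the remainder is a formal consequence of Theorem \ref{principalintro} and the adjunction.
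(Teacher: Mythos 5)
Your proposal follows essentially the same route as the paper: reduce to $\secat(f)=\csecat^{\topgen_Q}(f)$ via Theorem \ref{principal} (with Remark \ref{veremos}) and then transfer the invariant across the Milnor adjunction $\mid\cdot\mid\colon\simpset\leftrightarrows\topgen_Q\colon\singular$. The transfer lemma you propose to prove from scratch is exactly what the paper obtains by citing Proposition \ref{dim-mod-funt} and Theorem \ref{teor-TC-uguale}, whose hypotheses are immediate for this adjunction; the one point to watch in your version is that the join construction requires preservation of homotopy \emph{pushouts} as well as pullbacks, which for $\singular$ comes not from its being right Quillen but from the fact that both adjoints preserve all weak equivalences.
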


These results readily translate to widely studied LS invariants. Recall that the {\em Lusternik-Schnirelmann category} $\cat(X)$ of a given topological space \cite{lussch} is the least integer $n$ for which $X$ can be covered by $n+1$ open sets deformable to a point within $X$.  On the other hand,  the \textit{topological complexity} $\tc(X)$ of  $X$  \cite{far} is the least integer $n$ such that $X\times X$ can be covered by $n+1$ open sets on each of which there is a section of the path fibration $X^I\to X\times X$ which associates to each path its initial and final points. When $X$ is  the configuration space associated to the motion of a given mechanical system, this invariant measures, roughly speaking, the minimum amount of instructions needed for controlling the given system.

In a  $J$-category $\mathcal C$, the {\em abstract Lusternik-Schnirelmann category} $\ccat^{\mathcal C}(X)$ of a given object $X$  is defined as $\csecat^{\mathcal C}(*\to X)$ where $*$ denotes the $0$-object of $\mathcal C$ \cite{do}. In the same way, the {\em abstract topological complexity} $\ctc^{\mathcal C}(X)$ is defined as $\csecat^{\mathcal C}(\Delta)$ where $\Delta\colon X\to X\times X$ is the diagonal \cite{digargarmure}. Then,  the path fibration is homotopy equivalent to the diagonal, and for path-connected spaces one has
  $$
  \cat(X)=\ccat^{\topgen_S}(X),\qquad  \tc(X)=\ctc^{\topgen_S}(X).
   $$
   For the first equality $X$ only needs to be normal and well pointed \cite[Thm 1.55]{corlupotan}, and $\topgen=\topo_*$. For the second, $\topgen=\topo$ and we  require $X\times X$ to be normal \cite[Thm. 2.2]{ferghienkahlvan}. Under these general hypotheses, the theorems above  immediately imply (this is Corollary \ref{elunico} and the second assertion of Theorem \ref{ultimo}):

\begin{Corollario}\label{dos} For any space $X$  of the homotopy type of a locally compact CW-complex,
$$
\begin{aligned}
\tc(X)&=\ctc^{\topgen_Q}(X)=\ctc^{\simpset}\bigl(\singular(X)\bigr),\\
\cat(X)&=\ccat^{\topgen_Q}(X)=\ccat^{\simpset}\bigl(\singular(X)\bigr).
\end{aligned}
$$
\end{Corollario}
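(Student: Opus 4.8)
The plan is to deduce both rows from Theorems \ref{principalintro} and \ref{uno} applied to the two structural maps $*\to X$ and $\Delta\colon X\to X\times X$, using the definitions $\ccat^{\mathcal C}(X)=\csecat^{\mathcal C}(*\to X)$ and $\ctc^{\mathcal C}(X)=\csecat^{\mathcal C}(\Delta)$. First I would check that the CW hypothesis propagates to these maps: if $X\simeq K$ with $K$ a locally compact CW-complex, then $K\times K$ is again a locally compact CW-complex, so $X\times X\simeq K\times K$ has the homotopy type of a locally compact CW-complex, as trivially does the point. Thus both $*\to X$ and $\Delta$ have source and target of the homotopy type of locally compact CW-complexes, and Theorem \ref{principalintro} immediately yields $\ccat^{\topgen_S}(X)=\ccat^{\topgen_Q}(X)$ and $\ctc^{\topgen_S}(X)=\ctc^{\topgen_Q}(X)$, which are the middle equalities of the two chains.

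To reach the classical invariants I invoke the bridge equalities $\cat(X)=\ccat^{\topgen_S}(X)$ and $\tc(X)=\ctc^{\topgen_S}(X)$ recalled above. The subtlety here, and the step I expect to be the main obstacle, is that these require $X$ to be normal and well pointed, respectively $X\times X$ to be normal, whereas the corollary only postulates the homotopy type of a locally compact CW-complex. The remedy is to observe that $\cat$, $\tc$, $\ccat^{\topgen_S}$ and $\ctc^{\topgen_S}$ are all homotopy invariants, so I may compute them on a fixed locally compact CW model $K\simeq X$, which is normal and well pointed and has $K\times K$ normal; the bridge identities then hold verbatim for $K$ and transfer back to $X$. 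This settles the first equality in each row.

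For the simplicial description I apply Theorem \ref{uno} to $f=*\to X$ and to $f=\Delta$, again performing the comparison on the CW model $K$ so that the required normality of the target is available, and transporting the conclusion back to $X$ via homotopy invariance of $\secat$ and of $\csecat^{\simpset}\circ\singular$. Since $\singular$ is right adjoint to geometric realization it preserves all limits; in particular $\singular(*)=*$ and $\singular(X\times X)\cong\singular(X)\times\singular(X)$, whence $\singular(*\to X)=(*\to\singular(X))$ and $\singular(\Delta)=\Delta_{\singular(X)}$ (this abstract argument sidesteps the Milnor subtlety about realization of products, which never enters). Theorem \ref{uno} then gives $\secat(*\to X)=\csecat^{\simpset}(*\to\singular X)=\ccat^{\simpset}(\singular X)$ and $\secat(\Delta)=\csecat^{\simpset}(\Delta_{\singular X})=\ctc^{\simpset}(\singular X)$. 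It finally remains to identify the left-hand sides with the LS invariants: a homotopy local section of $*\to X$ over an open $U$ is exactly a nullhomotopy of $U\hookrightarrow X$, so $\secat(*\to X)=\cat(X)$ with matching index conventions, while the path fibration being homotopy equivalent to $\Delta$ gives $\secat(\Delta)=\tc(X)$. Concatenating all these identities produces the two displayed chains.
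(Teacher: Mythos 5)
Your argument is correct and follows the paper's own route: Corollary \ref{dos} is exactly Corollary \ref{elunico} together with the second assertion of Theorem \ref{ultimo}, both obtained by specializing Theorems \ref{principalintro} and \ref{uno} to the maps $*\to X$ and $\Delta\colon X\to X\times X$ and invoking the identifications $\cat(X)=\ccat^{\topgen_S}(X)$ and $\tc(X)=\ctc^{\topgen_S}(X)$. The only difference is cosmetic: where you discharge the normality and well-pointedness hypotheses by passing to a locally compact CW model and using homotopy invariance of all the invariants involved, the paper simply declares those hypotheses to be implicitly assumed whenever the classical notions appear (Remark \ref{veremos}).
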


 The next section is devoted to set the notation and present basic facts. In Section 2 we prove the results above.

We thank the referee for his/her valuable remarks, corrections and suggestions which have improved both the content and presentation of the paper.

\section{Preliminaries}

In this section  we present the basic constructions and some results about the abstract sectional category of a given morphism in a categorical setting \cite{digargarmure}.

We follow the usual modern approach, see for instance \cite{ho}, and drop the original adjective ``closed" introduced by Quillen in \cite{qui} when considering a model category $\mathcal C$. For such a category we denote by $fib$, $cof$, and $we$ the family of fibrations, cofibrations and weak equivalences respectively. A factorization of a given morphism $f=p\circ h$ as a composition of a weak equivalence and a fibration is called an {\em $F$-factorization} of $f$. In the same way, a {\em $C$-factorization} of $f$ is a composition $f=h\circ i$ of a cofibration and a weak equivalence.  Recall that a model category is {\em proper} \cite{baues} if weak equivalences are preserved by pullbacks along fibrations and by pushouts along cofibrations. From now on, every considered category shall be a proper model category.  Remark also that such a category, if pointed, satisfies all the axioms of a so called $J$-category (see \cite{digargarmure} or \cite{do} for a precise definition) except that it may not satisfy the {\em cube axiom} \cite[\S1]{do}.

Given a category $\mathcal C$ endowed with two proper model structures $(fib_{1}, cof_{1},$ $ we_{1})$ and $(fib_{2}, cof_{2}, we_{2})$ such that ${we}_{1} \subset we_{2}$ and $fib_{1} \subset fib_{2}$, there  exists another proper model structure $(fib_{m}, cof_{m}, we_{m})$ on $\mathcal C$,  called \textit{mixed structure}, for which $we_{m} = we_{2}$ and $fib_{m} = fib_{1}$ \cite[Thm. 2.1 and \S4]{co} (note that in this reference, the classical Quillen ``closed" terminology is used).

As there are slightly different approaches to homotopy pullbacks and pushouts in model categories, we include here the one we use. A commutative square
$$\xymatrix{
  {D} \ar[d]^{g'} \ar[r]^{f'}
                      & C \ar[d]_{g}    \\
  {A} \ar[r]_{f}     & B               }
$$ \noindent is  a \textit{homotopy pullback} if given
an
$F$-factorization of $g$ (equivalently $f$ or both),  the dotted induced map 
$D\stackrel{\sim}{\to}E'$ is a weak equivalence
$$\xymatrix@C=.7cm@R=.5cm{
{D} \ar[dd]_{g'} \ar[rrr]^{f'} \ar@{.>}[dr]_{\sim} & & & {C} \ar[dd]^g
\ar[dl]^{\tau }_{\sim} \\
 & {E'} \ar@{>>}[dl]_{\overline{p}} \ar[r]_{\overline{f}} & {E} \ar@{>>}[dr]^p & \\
 {A} \ar[rrr]_f & & & B.  }$$
 Here, $E'$ denotes the pullback of $p$ and $f$. This definition does not depend on the chosen factorizations.
The notion of \textit{homotopy pushout} is dually defined.

In a proper model category the \textit{join} $A \ast_{B} C$ of two morphisms $f\colon A \to B$ and $g\colon C \rightarrow B$ is the object obtained by the pushout of $i$ and $\pi_2$ in the following diagram,
\begin{displaymath}
\xymatrix@!0{
 & Z \times_{B} C \ar[rrrr]^{\pi_1}\ar@{>>}[ddd]_{\pi_2}\ar@{ >->}_{i}[rrd]
   && && Z \ar@{>>}^{p}[ddd]\ar@{<-}_{\sim}^{\tau}[lld] & A \ar[l]^{\nu}_{\sim} \ar[lddd]^{f}
\\
 & && M \ar[d]_{\pi_2'} && &
 \\
 & && A\ast_{B} C \ar@{.>}^{j}[rrd] && &
 \\
  & C \ar@{ >->}[rru]^{i'} \ar[rrrr]_{g} && && B, &
 }
\end{displaymath}
where $f = p \circ \nu$ is any $F$-factorization, $Z\times_BC$ is the pullback of $g$ and $p$ and $\pi_1 = \tau \circ i$ is any $C$-factorization.
The dotted map induced by the pushout construction from $A\ast_{B} C$ to B is called the \textit{join morphism} of $f$ and $g$. The join $A\ast_{B} C$ is well defined and symmetric up to weak equivalence. Also, the weak equivalence class of the join map does not depend on the chosen factorization.

On the other hand \cite[Def. 4]{digargarmure} a morphim $g\colon C\to B$ is said to admit a  \textit{weak section} if, given an $F$-factorization $g=p\circ \tau$ of $g$ and a cofibrant replacement $\overline B\stackrel{\sim}{\twoheadrightarrow} B$ of $B$ (i.e., a trivial fibration  in which $\overline B$ is cofibrant), there exists $s$ making commutative the following diagram:
\begin{equation}\label{diagrama2}
 \xymatrix@!0{
 && C \ar[ld]_{\sim}^{\tau} \ar[dd]^{g} \\
 & E \ar@{>>}[rd]^{p} \ar@{<.}[ld]_{s}\\
 \overline{B} \ar@{>>}[rr]_{{\sim}} && B.
 }
\end{equation}
This definition does not depend on the made choices.

\begin{Definizione}\cite[Def. 8]{digargarmure}
Let $f \colon X \rightarrow Y$ be any morphism in $\mathcal C$. For each $n\ge 0$ we define the object $\ast_{Y}^{n} X$ and the morphism $h_{n} \colon \ast_{Y}^{n} X \rightarrow Y$ inductively as follows:

\textit{(i)} $h_{0} = f \colon X \rightarrow Y$ (and so $\ast_{Y}^{0} X = X$).

\textit{(ii)} Suppose that $h_{n-1} {\colon} \ast_{Y}^{n-1}X \rightarrow Y$ has already been  constructed. Then, $h_{n}$ is the join morphism of $f$ and $h_{n-1}$.

The \textit{Ganea sectional category} of $f$, denoted by $\gsecat(f)$, is the least integer $n \leq \infty$ such that $h_{n}$ admits a weak section.
\end{Definizione}
This notion is an invariant of weakly equivalent morphisms \cite[Proposition 10]{digargarmure}. In other words, if $f$ and $g$ are connected by a chain of weak equivalences in the morphism category, then $\gsecat(f)=\gsecat(g)$.

\begin{Osservazione}
The {\em Whitehead sectional categoy} $\wsecat(f)$ of a given morphism can also be defined by means of the abstract fat wedge \cite[Def. 12]{digargarmure}. Moreover, if the proper model structure in $\mathcal C$ satisfies the cube axiom, then \cite[Thm. 15]{digargarmure},
$$
\gsecat(f)=\wsecat(f)
$$ for any morphism $f$, and we denote this common invariant simply by
$$\csecat(f)\quad\text{or}\quad
\csecat^{\mathcal C}(f)$$ if we want to stress the category in which we are working. Nevertheless, we have chosen the Ganea approach as it fits better in our arguments.
\end{Osservazione}

Next, we recall that a covariant functor $\mu {\colon} {\mathcal C} \rightarrow {\mathcal D}$  is said to be a \textit{modelization functor} \cite[Def. 6.1]{do} if it preserves weak equivalences, homotopy pullbacks and homotopy pushouts.
The following helps to detect modelization functors and when such functors preserve the abstract sectional category in any of its versions.

\begin{Proposizione}\label{dim-mod-funt}{\em \cite[Prop. 6.5]{do}}
Let $\alpha {\colon} {\mathcal C} \leftrightarrows {\mathcal D} {\colon} \beta$ be a pair of adjoint functors ($\alpha$ is left adjoint) such that:

(i) $\alpha$ and $\beta$ preserve weak equivalences.

(ii) $\alpha$ preserves cofibrations and $\beta$ preserves fibrations.

 (iii) The adjunction maps $X \rightarrow \beta \circ \alpha (X) $ and $\alpha \circ \beta (Y)  \rightarrow Y$ are weak equivalences for any objects $X, Y$.

Then, $\alpha$ and $\beta$ are modelization functors.
\end{Proposizione}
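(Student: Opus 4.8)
The plan is to verify, for each of $\alpha$ and $\beta$, the three conditions defining a modelization functor. Preservation of weak equivalences is exactly hypothesis (i), so all the work lies in the homotopy (co)limits, and it is natural to split the verification into an ``easy'' half, using only (i)--(ii), and a ``hard'' half, using (iii). For the easy half: being a left adjoint, $\alpha$ preserves pushouts, and by (i)--(ii) it preserves weak equivalences and cofibrations; consequently it carries a $C$-factorization to a $C$-factorization, identifies the comparison map of the image square with the image of the original comparison map, and hence takes homotopy pushouts to homotopy pushouts. Dually, $\beta$ is a right adjoint and preserves pullbacks, fibrations and weak equivalences, so it sends an $F$-factorization to an $F$-factorization and therefore preserves homotopy pullbacks.

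The two remaining halves -- that $\alpha$ preserves homotopy \emph{pullbacks} and $\beta$ preserves homotopy \emph{pushouts} -- are where (iii) enters, and they are formally dual, so I would treat only the first. The crucial preliminary is that \emph{$\beta$ reflects weak equivalences}: if $\beta(f)$ is a weak equivalence for some $f\colon Y_1\to Y_2$ in $\mathcal D$, then $\alpha\beta(f)$ is one by (i), and feeding this into the naturality square of the counit $\alpha\circ\beta\Rightarrow\Id$ -- whose vertical maps $\alpha\beta(Y_i)\to Y_i$ are weak equivalences by (iii) -- two applications of the two-out-of-three axiom force $f$ to be a weak equivalence. The same argument with the unit shows that $\alpha$ reflects weak equivalences.

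Now take a homotopy pullback square $S$ in $\mathcal C$. The unit provides a morphism of squares $S\to\beta\circ\alpha(S)$ which is a weak equivalence at every vertex; since the homotopy pullback property is invariant under an objectwise weak equivalence of squares (this is where properness is used), $\beta\circ\alpha(S)$ is again a homotopy pullback. To test $\alpha(S)$, I would choose an $F$-factorization of its right-hand vertical arrow in $\mathcal D$ and let $c$ be the resulting comparison map into the pullback. Applying $\beta$ yields an $F$-factorization in $\mathcal C$, and since $\beta$ preserves pullbacks, $\beta(c)$ is exactly the comparison map of $\beta\circ\alpha(S)$ for that factorization; it is therefore a weak equivalence. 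As $\beta$ reflects weak equivalences, $c$ is a weak equivalence, so $\alpha(S)$ is a homotopy pullback. The dual argument -- using that $\alpha$ preserves homotopy pushouts and reflects weak equivalences, together with the counit -- shows that $\beta$ preserves homotopy pushouts, completing the proof.

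I expect the main obstacle to be precisely the reflection step in these two hard halves: preserving a homotopy pullback along a \emph{left} adjoint cannot follow from preservation of colimits and must instead be detected after transport through $\beta$, which is exactly what hypothesis (iii) -- the full strength of the unit and counit being weak equivalences on \emph{all} objects, not merely on (co)fibrant ones -- is designed to supply. A secondary point to pin down carefully is the invariance of the homotopy pullback (and pushout) property under objectwise weak equivalences of diagrams, which is the one external ingredient the argument relies on and is guaranteed by properness.
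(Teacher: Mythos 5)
The paper does not actually prove this proposition: it is quoted verbatim from Doeraene \cite[Prop.~6.5]{do}, so there is no internal proof to compare your attempt against. Your argument is correct and is essentially the standard one behind that citation. The ``easy'' halves ($\alpha$ on homotopy pushouts, $\beta$ on homotopy pullbacks) do follow from adjointness together with (i)--(ii) applied to the $C$- and $F$-factorization definitions of homotopy pushout and pullback, and the ``hard'' halves are correctly reduced, via hypothesis (iii) and two applications of two-out-of-three on the unit and counit naturality squares, to the fact that each functor reflects weak equivalences, after which the comparison map of $\alpha(S)$ is detected by applying $\beta$. The one external ingredient you invoke --- that the homotopy pullback (resp.\ pushout) property is invariant under objectwise weak equivalences of squares in a proper model category --- is a genuine lemma, strictly stronger than the independence-of-factorization statement the paper records, but it is standard and is established in Doeraene's paper; you identify it explicitly and locate correctly where properness is used, so there is no gap.
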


\begin{Teorema}\label{teor-TC-uguale}{\em \cite[Rem. 25 and Cor. 26]{digargarmure}}
Let $\mu {\colon} {\mathcal C} \leftrightarrows {\mathcal D} {\colon} \nu$ be modelization functors  and let $f$ be a morphism in ${\mathcal C}$ such that $\nu\bigl(\mu(f)\bigr)$ is weakly equivalent to $f$. Then, \begin{displaymath}\gsecat^{\mathcal D}\bigl(\mu(f)\bigr)=\gsecat^{\mathcal C}(f)\quad\text{and}\quad \wsecat^{\mathcal D}\bigl(\mu(f)\bigr)=\wsecat^{\mathcal C}(f).\end{displaymath}
\end{Teorema}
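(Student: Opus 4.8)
The plan is to prove the single general inequality
\begin{displaymath}
\gsecat^{\mathcal D}\bigl(\mu(g)\bigr)\le \gsecat^{\mathcal C}(g),
\end{displaymath}
valid for \emph{every} modelization functor $\mu\colon\mathcal C\to\mathcal D$ and every morphism $g$ of $\mathcal C$, and then to deduce the equalities in the statement by running this inequality through both $\mu$ and $\nu$ and invoking the hypothesis $\nu\bigl(\mu(f)\bigr)\simeq f$ together with the weak-equivalence invariance of $\gsecat$ (Proposition~10 of \cite{digargarmure}, recalled above). Indeed, applying the inequality to $\nu$ and the morphism $\mu(f)$ gives $\gsecat^{\mathcal C}\bigl(\nu\mu(f)\bigr)\le\gsecat^{\mathcal D}\bigl(\mu(f)\bigr)$, and since $\nu\mu(f)\simeq f$ the left-hand side equals $\gsecat^{\mathcal C}(f)$; together with the inequality for $\mu$ this forces equality. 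The argument for $\wsecat$ is the same once the iterated joins below are replaced by the abstract fat wedges of \cite[Def.~12]{digargarmure}, which are built from the same homotopy-(co)limit constructions.

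To prove the general inequality I would first check, by induction on $n$, that $\mu$ carries the iterated join morphism $h_{n}\colon \ast^{n}_{Y} X\to Y$ of $g$ to a morphism weakly equivalent, in the morphism category of $\mathcal D$, to the $n$-th iterated join morphism of $\mu(g)$. The base case is $h_{0}=g$. For the inductive step, note that the join $A\ast_{B} C$ is built from the pullback of a fibration followed by the pushout of a cofibration; in a proper model category these are a homotopy pullback and a homotopy pushout respectively. Since $\mu$ preserves weak equivalences, homotopy pullbacks and homotopy pushouts, it sends this construction to the one built from $\mu(g)$ and $\mu(h_{n-1})$ up to a zig-zag of weak equivalences; as by induction $\mu(h_{n-1})$ is weakly equivalent to the $(n-1)$-st iterated join morphism of $\mu(g)$, this exhibits $\mu(h_{n})$ as weakly equivalent to the $n$-th iterated join morphism of $\mu(g)$.

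The second ingredient is that a modelization functor preserves the existence of a weak section: if $g$ admits a weak section then so does $\mu(g)$. Here I would use that admitting a weak section is an invariant of the weak-equivalence class of the morphism and, passing to homotopy categories, is equivalent to $g$ being a split epimorphism in $\operatorname{Ho}(\mathcal C)$, i.e.\ to the existence of $\sigma$ with $g\circ\sigma=\mathrm{id}$ there. Since $\mu$ preserves weak equivalences it descends to a functor $\operatorname{Ho}(\mathcal C)\to\operatorname{Ho}(\mathcal D)$, which carries such a $\sigma$ to a homotopy section of $\mu(g)$; the choice-independence of the weak-section notion then promotes this homotopy section to an honest weak section of $\mu(g)$. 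Combining this with the first ingredient and the invariance of the weak-section property, if $h_{n}$ admits a weak section for $n=\gsecat^{\mathcal C}(g)$, then so does the $n$-th iterated join morphism of $\mu(g)$, whence $\gsecat^{\mathcal D}\bigl(\mu(g)\bigr)\le n$, which is the desired inequality.

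I expect the main obstacle to be this second ingredient, and specifically the passage from a homotopy section back to an honest weak section. A modelization functor need not preserve fibrations or cofibrations, so applying $\mu$ to an $F$-factorization $g=p\circ\tau$ yields $\mu(g)=\mu(p)\circ\mu(\tau)$ with $\mu(\tau)$ a weak equivalence but $\mu(p)$ no longer a fibration; one cannot simply transport the lift in diagram~\eqref{diagrama2}. The delicate step is therefore to re-factor $\mu(g)$, to compare the two factorizations by means of the homotopy pullback that $\mu$ does preserve, and to realize the resulting homotopy class as a strict lift against the fibration in the chosen $F$-factorization of $\mu(g)$, using cofibrancy of the replacement of its codomain. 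It is precisely here that the independence of the definition of weak section from the chosen factorization and cofibrant replacement is essential.
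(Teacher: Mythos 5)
The paper does not prove this statement at all: it is imported verbatim from \cite[Rem.~25 and Cor.~26]{digargarmure}, so there is no internal proof to compare against. That said, your reconstruction is the natural one and, as far as one can tell, is exactly the strategy of the cited source: modelization functors carry the iterated Ganea construction to the iterated Ganea construction up to weak equivalence (because in a proper model category the strict pullback along a fibration and the strict pushout along a cofibration are a homotopy pullback and a homotopy pushout), they preserve the property of admitting a weak section, and the two resulting inequalities for $\mu$ and $\nu$, combined with the weak-equivalence invariance of $\gsecat$ and the hypothesis $\nu\bigl(\mu(f)\bigr)\simeq f$, force the equalities. The one step you flag but do not close --- passing from a section of $\mu(g)$ in $\operatorname{Ho}(\mathcal D)$ back to an honest weak section --- does go through in a proper model category, essentially along the lines you indicate: refactor $\mu(g)$ by first embedding its codomain $B$ into a fibrant replacement $RB$ via a trivial cofibration, take an $F$-factorization over $RB$, and pull the resulting fibration back along $B\to RB$; right properness makes this an $F$-factorization of $\mu(g)$, and a weak section for it is the same as a strict lift of $\overline B\to B\to RB$ against the fibration over $RB$, whose total space is now fibrant. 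Since $\overline B$ is cofibrant, the homotopy-category section is represented by an actual map there, and the homotopy lifting property rigidifies it into a strict lift; the choice-independence of the weak-section notion finishes the argument. So I see no gap of substance, only an honestly labelled technical step whose verification is routine; your treatment of the $\wsecat$ half is more cursory (the fat wedge lives naturally in a $J$-category and its preservation needs the analogous, slightly longer, induction), but the same scheme applies.
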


Finally,  recall that  the {\em abstract Lusternik-Schnirelmann category} $\ccat(X)$, or $\ccat^{\mathcal C}(X)$, of a given object $X$ in $\mathcal C$ is defined as \cite[\S3]{do},
$$
\ccat(X)=\csecat(*\to X),
$$
where $*$ denotes the initial object.
In the same way, the {\em abstract topological complexity} $\ctc(X)$, or $\ctc^{\mathcal C}(X)$, of a given object $X$ is defined as \cite[\S3.1]{digargarmure},
$$
\ctc(X)=\csecat(\Delta),
$$
where $\Delta\colon X\to X\times X$ is the diagonal.

\begin{Osservazione} Here, as $\mathcal C$ is not assumed to be a $J$-category, i.e., it might not be pointed, we have to specify the initial object in the definition of $\ccat(X)$. In the next section, see Remark \ref{veremos} below, when we identify the classical Lusternik-Schnirelmann category of a topological space with its abstract counterpart, we do so  only in the based setting $\topo_*$ where the point is the initial object. In the same way, we will identify the usual topological complexity of a topological space only in the free setting $\topo$. In this case, whenever Theorem \ref{teor-TC-uguale} is invoked, we will be using its ``unpointed'' version \cite[Thm. 27 and Cor. 28]{digargarmure} for which the modelization functors are required to preserve the final object.
\end{Osservazione}

\section{Sectional category in model structures on topological spaces }

Recall that in $\topo$, the category of topological spaces and continuous maps, there are two standard structures which make it a proper model category. The first one, in which the fibrations are the Serre fibrations, the weak equivalences are the weak homotopy equivalences and the cofibrations are those morphisms that have the left lifting property with respect to all trivial fibrations, was defined by Quillen in \cite{qui}. The other, introduced by  Str\o{}m in \cite{s}, has as fibrations the Hurewicz fibrations, as cofibrations the closed topological cofibrations and as weak equivalences the homotopy equivalences. These structures are inherited by $\topo_*$, the category of pointed spaces and pointed maps. Unless explicitely stated otherwise, we will make no distinction  between the free and pointed setting and denote either $\topo$ or $\topo_*$ by $\mathcal T$. In the same way $\topgen_S$ and $\topgen_Q$ will denote the corresponding Str{\o}m or Quillen proper structure.

Associated to these structures we can also consider the proper model caegory given by the {\em mixed structure} both in the free and pointed setting, which we denote by $\topgen_M$. For it, the mixed fibrations are the Hurewicz fibrations, the mixed weak equivalences are the weak homotopy equivalences and the mixed cofibrations are those morphisms which have the left lifting property with respect to all trivial fibrations. It has been shown \cite[Ex. 3.8]{co}, \cite[Cor. 17.4.3]{pomay}, that the cofibrant objects in $\topgen_{M}$ are exactly the (well pointed in the based setting) topological spaces with the homotopy type of a CW-complex.

\begin{Osservazione}\label{porfin} In the following, and to assure the cofibrant character  on the mixed structure, any space of the homotopy type of a CW-complex shall be well pointed whenever we refer to the based setting.
\end{Osservazione}

 The following result is crucial for our purposes.

\begin{Lemma}\label{join-inv2}
Let $f \colon A \rightarrow B$ and $g \colon C \rightarrow B$ be two maps in which $A, \, B$ and $C$ have the homotopy type of  CW-complexes. Then, the join object $A \ast_{B} C$ computed in $\mathcal{T}_{S}$ can be chosen to be the same as the one obtained in $\mathcal{T}_{M}$.
\end{Lemma}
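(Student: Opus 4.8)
The plan is to exploit the fact that $\topgen_S$ and $\topgen_M$ share the same fibrations (the Hurewicz fibrations) and satisfy $we_S\subseteq we_M$ together with the dual inclusion $cof_M\subseteq cof_S$, and then to arrange a single pair of $F$- and $C$-factorizations that is valid in both structures at once. Recall that $A\ast_B C$ is the pushout of $\pi_2\colon Z\times_B C\to C$ and the cofibration $i$ arising from a $C$-factorization $\pi_1=\tau\circ i$ of the projection $\pi_1\colon Z\times_B C\to Z$, where $f=p\circ\nu$ is an $F$-factorization and $Z\times_B C$ is the pullback of $p$ and $g$. Since this pushout is one object of $\topgen$, it suffices to build factorizations legitimate in $\topgen_S$ and in $\topgen_M$ simultaneously; the pushout along $i$ will then compute the join in both, as $i$ will be a cofibration for each structure.

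First I would take the $F$-factorization $f=p\circ\nu$ in $\topgen_S$, so that $\nu$ is a homotopy equivalence and $p$ a Hurewicz fibration. Because $we_S\subseteq we_M$ and the Hurewicz fibrations are exactly the mixed fibrations, this is at the same time an $F$-factorization in $\topgen_M$; moreover $Z\simeq A$ has the homotopy type of a CW-complex. The pullback $Z\times_B C$ is the same object in both structures, and the crucial geometric input is that it again has CW homotopy type. Indeed, the fibre of the Hurewicz fibration $p$ has CW homotopy type since $Z$ and $B$ do, while $\pi_2\colon Z\times_B C\to C$ is a Hurewicz fibration with that same fibre over the CW-type base $C$; hence its total space $Z\times_B C$ has CW homotopy type.

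With $Z\times_B C$ now cofibrant in $\topgen_M$, I would perform the $C$-factorization $\pi_1=\tau\circ i$ in the \emph{mixed} structure, choosing $i\colon Z\times_B C\to M$ a mixed cofibration and $\tau\colon M\to Z$ a trivial mixed fibration. As $i$ is a cofibration out of a cofibrant object, $M$ is cofibrant and therefore of CW homotopy type; and since $cof_M\subseteq cof_S$, the map $i$ is automatically a closed cofibration, i.e.\ a Str{\o}m cofibration (in the based case the standing well-pointedness convention of Remark~\ref{porfin} guarantees the same cofibrancy). The only point requiring the hypotheses is the weak equivalence: a priori $\tau$ is merely a weak homotopy equivalence, but since both $M$ and $Z$ have the homotopy type of CW-complexes, Whitehead's theorem upgrades it to a genuine homotopy equivalence, so $\tau\in we_S$. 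Thus $\pi_1=\tau\circ i$ is simultaneously a $C$-factorization in $\topgen_M$ and in $\topgen_S$.

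Forming the join as the pushout of $i$ and $\pi_2$ then yields one and the same object, and because $i$ is a cofibration in both proper structures this pushout is a homotopy pushout, hence the join, in each of them. The main obstacle is the middle step, namely that the pullback $Z\times_B C$ inherits CW homotopy type: this is not formal and rests on the classical facts that a Hurewicz fibration between spaces of CW homotopy type has fibres of CW homotopy type, and that a Hurewicz fibration whose base and fibres have CW homotopy type has total space of CW homotopy type. Everything else is a direct comparison of the factorization axioms of the two structures, together with a single application of Whitehead's theorem.
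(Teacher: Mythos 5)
Your proof is correct, and it follows the same two opening moves as the paper (a Str{\o}m $F$-factorization of $f$, which is automatically a mixed $F$-factorization since the two structures share their fibrations, followed by the observation that the pullback $Z\times_B C$ retains CW homotopy type), but it handles the decisive third step --- the $C$-factorization of $\pi_1$ --- in the opposite direction. The paper factors $\pi_1$ in $\topgen_S$ through the concrete mapping cylinder $M_{\pi_1}$ and then invokes \cite[Cor.~3.12]{co}, the substantive fact that a Str{\o}m cofibration between mixed-cofibrant objects is a mixed cofibration; you instead factor $\pi_1$ in $\topgen_M$ and push the factorization back to $\topgen_S$ using the purely formal inclusion $cof_M\subseteq cof_S$ (Str{\o}m trivial fibrations are contained in mixed trivial fibrations, so the lifting-property characterization of cofibrations gives the inclusion for free), together with one application of Whitehead's theorem to upgrade the mixed weak equivalence $\tau$ to a genuine homotopy equivalence between the CW-type spaces $M$ and $Z$. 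Your route trades Cole's corollary for Whitehead's theorem plus the observation that $M$, being mixed-cofibrant as the target of a mixed cofibration out of a mixed-cofibrant object, has CW homotopy type; both are legitimate, and since your join is again the pushout of a mixed cofibration from a mixed-cofibrant object, the later steps of the paper (Remark~\ref{oss-cl-push} and Lemma~\ref{weak-sec-uguale-str-mix1}) go through unchanged with your construction. The one point worth flagging is the CW homotopy type of $Z\times_B C$: your argument (the fibre of $p$ has CW type because $Z$ and $B$ do, and then the total space of the Hurewicz fibration $\pi_2$ over the CW-type base $C$ with that fibre has CW type) is a correct but genuinely nontrivial classical chain of results, for which the paper simply cites \cite[Thm.~7.5.9]{se}; you rightly identify this as the only non-formal input.
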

\begin{proof}
To build the join in the Str\o{}m  structure, choose an $F$-factorization $f = p \circ \nu$ of $f$, where $\nu$ is a homotopy equivalence and $p$ is a Hurewicz fibration. Then, observe that this is also an $F$-factorization in the mixed structure, since the class of the fibrations is the same in both structures and every homotopy equivalence is also a weak homotopy equivalence. In particular, the factorizing object $Z$ as in diagram (\ref{diagrama1}) below turns out to be, by the observation above, a cofibrant object in the mixed structure.

The second step is to consider the pullback $Z \times_{B} C$ of $p$ and $g$, which is the same object in both  model structures. Note that $Z \times_{B} C$ has the homotopy type of a CW-complex (see for instance \cite[Thm. 7.5.9]{se}).

Next, we  take a $C$-factorization of the projection $\pi_{1} {\colon} Z \times_{B} C \rightarrow Z$ in the Str\o{}m structure  through its mapping cylinder $M_{\pi_{1}}$. That is, $\pi_{1} = \tau \circ i$ in which $i  \colon Z \times_{B} C \hookrightarrow M_{\pi_{1}}$ is a closed topological cofibration and $\tau \colon M_{\pi_{1}} \stackrel{\sim}{\rightarrow} Z$ is a homotopy equivalence.

The key point is that this is in fact a $C$-factorization of $\pi_1$ in  the mixed structure. For it observe that $M_{\pi_{1}}$ has the same homotopy type of a CW-complex as $\tau$ is a  equivalence. Hence, it follows that $i$ is a cofibration in the Str\o{}m structure between objects that are cofibrant in the mixed structure. We now apply \cite[Cor. 3.12]{co} by which every cofibration in the Str\o{}m structure between mixed cofibrant objects is also a mixed cofibration. Thus, $i$ is  a mixed cofibration and therefore, $\pi_{1} = \tau \circ i$ is also a $C$-factorization in the mixed structure. Hence, the following construction
\begin{equation}\label{diagrama1}
\xymatrix@!0{
Z \times_{B} C \ar@{ >->}[drr]_{i} \ar@{>>}[ddd]_{\pi_{2}}  \ar[rrrr]^-{\pi_{1}} &&&& Z \ar@{>>}[ddd]^{p} & A \ar[l]_{\nu}^{\sim} \ar[lddd]^{f} \\
&& M_{\pi_{1}} \ar[urr]_{\tau}^{\sim} \ar[d]_{\pi_{2} '}\\
&& A \ast_{B} C \ar[rrd]^{j} \\
C \ar@{ >->}[rru]^{i'} \ar[rrrr]_{g} &&&& B
}
\end{equation}
holds in both  model structures and the claim follows.
\end{proof}

\begin{Osservazione}\label{oss-cl-push}
Observe that since \cite[\S II, Prop. 8.1]{gojar} holds, the join object $A \ast_{B} C$ constructed in the previous proof is a cofibrant object in the mixed structure, that is \cite{co},\cite[Cor. 17.4.3]{pomay}, it has the homotopy type of a CW-complex.
Moreover, the join morphism $j \colon A \ast_{B} C \rightarrow B$ constructed in diagram (\ref{diagrama1}) above is the same in the two model structures, since it only depends on the universal property of pushouts.
\end{Osservazione}

\begin{Lemma}\label{weak-sec-uguale-str-mix1}
Let $A,\,B,\,C$ be spaces of the homotopy type of CW-complexes and let $j \colon A \ast_{B} C \rightarrow B$ be the join morphism constructed above. Then, $j$ admits a weak section in $\mathcal{T}_{S}$ if and only if it admits a weak section in $\mathcal{T}_{M}$.
\end{Lemma}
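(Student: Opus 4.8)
The plan is to reduce both weak-section conditions to one and the same diagram, by making compatible choices of $F$-factorization and cofibrant replacement in the two structures, and then to observe that the resulting condition is purely topological and hence cannot tell the two structures apart.

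First I would fix an $F$-factorization of the join map in the Str\o{}m structure, say $j = p \circ \tau$ with $\tau \colon A \ast_{B} C \stackrel{\sim}{\to} E$ a homotopy equivalence and $p \colon E \twoheadrightarrow B$ a Hurewicz fibration (recall that by Lemma \ref{join-inv2} the object $A \ast_{B} C$ and the morphism $j$ are the common ones in both structures). Exactly as in the proof of Lemma \ref{join-inv2}, this is simultaneously an $F$-factorization in $\mathcal{T}_{M}$: the fibrations of $\mathcal{T}_{S}$ and $\mathcal{T}_{M}$ coincide (both being the Hurewicz fibrations), and every homotopy equivalence is a weak homotopy equivalence. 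Thus a single factorization serves for both structures.

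Next I would treat the cofibrant replacement of $B$, and this is where the hypothesis that $B$ has the homotopy type of a CW-complex enters: in $\mathcal{T}_{M}$ this says precisely that $B$ is cofibrant, while in $\mathcal{T}_{S}$ every object is cofibrant, under the standing well-pointedness convention of Remark \ref{porfin} in the based case. Consequently the identity $\Id_{B} \colon B \to B$, which is a trivial fibration in either structure, is a legitimate cofibrant replacement $\overline{B} = B$ in each of them. Since the notion of weak section does not depend on the chosen $F$-factorization nor on the chosen cofibrant replacement, I am free to compute it in both structures using these common choices.

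With the choices above, the defining diagram (\ref{diagrama2}) for a weak section of $j$ reduces, in both structures, to the single requirement that there exist a continuous map $s \colon B \to E$ with $p \circ s = \Id_{B}$, that is, an honest section of the Hurewicz fibration $p$. This requirement refers only to the fixed topological data $p, E, B$ and to continuous maps between them, none of which depends on the ambient model structure; therefore $j$ admits a weak section in $\mathcal{T}_{S}$ if and only if it admits one in $\mathcal{T}_{M}$. The one point I would treat with care — and the main, if modest, obstacle — is the verification that $\Id_{B}$ is genuinely an admissible cofibrant replacement in \emph{both} structures at once: for $\mathcal{T}_{M}$ this is the cited characterization of cofibrant objects, whereas for $\mathcal{T}_{S}$ one must check that $\emptyset \hookrightarrow B$ (respectively $\ast \hookrightarrow B$ in the based setting) is a closed cofibration, which in the based case is exactly where the well-pointedness assumption of Remark \ref{porfin} is needed.
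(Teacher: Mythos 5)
Your proof is correct, and it takes a genuinely different route from the paper's. The paper argues asymmetrically: the implication from $\mathcal{T}_{S}$ to $\mathcal{T}_{M}$ is read off from the fact that every Str\o{}m $F$-factorization is a mixed one, while for the converse it chooses a mixed $F$-factorization through a cofibrant object and invokes the Whitehead theorem (together with the mixed cofibrancy of $A \ast_{B} C$ from Remark \ref{oss-cl-push}) to promote that factorization to a Str\o{}m $F$-factorization. You instead make a single common choice valid in both structures --- a Str\o{}m $F$-factorization $j = p \circ \tau$, which is automatically a mixed one since the two structures share their fibrations, and $\Id_{B}$ as cofibrant replacement --- and reduce both weak-section conditions to the literally identical, structure-independent statement that the Hurewicz fibration $p$ admits a section. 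This is more symmetric and in fact slightly more general: it uses neither the cofibrancy of $A \ast_{B} C$ nor the Whitehead theorem, and it proves the equivalence for an arbitrary map into a space $B$ of the homotopy type of a CW-complex (well pointed in the based case, per Remark \ref{porfin}). The two points your argument genuinely rests on are both in order: the admissibility of $\Id_{B}$ as a cofibrant replacement simultaneously in $\mathcal{T}_{S}$ and $\mathcal{T}_{M}$, which you correctly isolate and justify, and the independence of the weak-section property from the chosen $F$-factorization and cofibrant replacement, which is exactly the independence statement recorded after diagram (\ref{diagrama2}).
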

\begin{proof}
It is clear that if $j$ admits a weak section in the Str\o{}m  structure, it also admits a weak section in the mixed one, since every $F$-factorization in the Str\o{}m structure is also an $F$-factorization in the mixed one.

Conversely, suppose that $j$ admits a weak section in the mixed structure. Recall that $A \ast_{B} C$ is cofibrant in the mixed structure by Remark \ref{oss-cl-push} and observe that  it is always possible to choose an $F$-factorization through a cofibrant object.  Then, by the Whitehead Theorem, the mixed $F$-factorizaton of $j$ required for the weak lifting (see diagram (\ref{diagrama2})) can be chosen to be also an $F$-factorization in the Str\o{}m structure. Hence, the thesis holds.
\end{proof}

We are now ready to prove our main result.

\begin{Teorema}\label{principal}
Let $f\colon X\to Y$ be a map  in which $X$ and $Y$ are  of the homotopy type of locally compact CW-complexes. Then, $\csecat^{\topgen_M}(f)$ and $\csecat^{\topgen_Q}(f)$ are well defined and
$$
\csecat^{\topgen_S}(f)=\csecat^{\topgen_M}(f)=\csecat^{\topgen_Q}(f).
$$
\end{Teorema}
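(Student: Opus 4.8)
The plan is to establish the chain of equalities in two stages, treating the Str{\o}m--Mixed comparison and the Mixed--Quillen comparison separately, since these rely on quite different mechanisms. For the first equality $\csecat^{\topgen_S}(f)=\csecat^{\topgen_M}(f)$, I would work directly with the Ganea definition of sectional category via the iterated joins $h_n\colon \ast_Y^n X\to Y$. The key observation is that by Lemma \ref{join-inv2} and Remark \ref{oss-cl-push}, at each inductive step the join object and the join morphism can be chosen to be literally the \emph{same} in $\topgen_S$ and $\topgen_M$, provided all the spaces involved remain of CW-homotopy type. Since $X$ and $Y$ are of CW-homotopy type and each join of CW-homotopy-type spaces is again of CW-homotopy type (by the cofibrancy statement in Remark \ref{oss-cl-push}), an easy induction shows that every $h_n$ can be realized identically in both structures. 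Then Lemma \ref{weak-sec-uguale-str-mix1} tells us that $h_n$ admits a weak section in $\topgen_S$ if and only if it does in $\topgen_M$, and hence the least such $n$ agrees: $\gsecat^{\topgen_S}(f)=\gsecat^{\topgen_M}(f)$, which is what we want.

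For the second equality $\csecat^{\topgen_M}(f)=\csecat^{\topgen_Q}(f)$, I expect the natural route to go through Theorem \ref{teor-TC-uguale} and Proposition \ref{dim-mod-funt}, exhibiting an adjoint pair of modelization functors relating the mixed and Quillen structures that fixes $f$ up to weak equivalence. The identity functor $\topgen_M\to\topgen_Q$ and its partner in the other direction are the obvious candidates, since both structures share the same class of weak equivalences (the weak homotopy equivalences); one must check the hypotheses (i)--(iii) of Proposition \ref{dim-mod-funt}, namely that weak equivalences and the relevant cofibrations/fibrations are preserved and that the adjunction units/counits are weak equivalences. Because $we_M=we_Q$, preservation of weak equivalences is immediate, and the interplay of cofibrations and fibrations between the mixed and Quillen structures is exactly the kind of containment recorded in the construction of the mixed structure. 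Once the functors are shown to be modelization functors, and once $f$ is seen to be weakly equivalent to its image under the composite (which uses that $X,Y$ are already of CW-homotopy type, hence cofibrant in the mixed structure), Theorem \ref{teor-TC-uguale} delivers the equality of the Ganea sectional categories.

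The main obstacle I anticipate is the local-compactness hypothesis, which has not yet been used in the Str{\o}m--Mixed step and must be doing its work in the Mixed--Quillen comparison. My expectation is that it enters when one needs the adjunction relating the two structures to respect products and function spaces, or when verifying that homotopy pullbacks and pushouts (and in particular the diagonal and path-fibration constructions underlying $\ctc$ and $\ccat$) are preserved; the exponential law and the good behavior of mapping spaces for locally compact domains are precisely the tools that make such functors modelization functors. A secondary technical point is the \emph{well-definedness} assertion: one must confirm that $\csecat^{\topgen_M}(f)$ and $\csecat^{\topgen_Q}(f)$ make sense, i.e.\ that the cube axiom or the equivalence $\gsecat=\wsecat$ is available in these structures, or else phrase everything in terms of $\gsecat$ throughout and only invoke the common notation $\csecat$ once the hypotheses guaranteeing $\gsecat=\wsecat$ are met. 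I would therefore carry out the proof using $\gsecat$ explicitly and collapse to $\csecat$ only at the end.
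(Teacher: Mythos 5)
Your proposal follows essentially the same two-stage argument as the paper: the Str{\o}m--Mixed comparison by applying Lemma \ref{join-inv2}, Remark \ref{oss-cl-push} and Lemma \ref{weak-sec-uguale-str-mix1} inductively to the Ganea joins, and the Mixed--Quillen comparison via the identity adjunction together with Proposition \ref{dim-mod-funt} and Theorem \ref{teor-TC-uguale}. Your only misplaced guess is where local compactness enters: it is not needed for the Mixed--Quillen step (which the paper carries out for arbitrary morphisms, the adjunction units being identities), but only to guarantee $\gsecat=\wsecat$, i.e.\ the well-definedness of $\csecat^{\topgen_M}$ and $\csecat^{\topgen_Q}$ --- and the paper explicitly notes the hypothesis is superfluous if one works with $\gsecat$ throughout, exactly as you propose to do.
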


\begin{proof} First, note that $\csecat^{\topgen_S}$ is well defined as both, in the free and pointed setting, the Str{\o}m   structure satisfies the cube axiom \cite[\S5]{ma} and it is a $J$-category when restricted to the full subcategory of well pointed spaces.  Also, it is straightforward to check that the full subcategory of $\topgen_M$ consisting of the topological spaces with the same homotopy type of a CW-complex is a J-category, except that it is not pointed in the free setting. Therefore, by \cite[Thm. 15]{digargarmure}, $\csecat^{\topgen_M}(f)$ is also well defined for any $f$ as in the statement since $\gsecat^{\topgen_M}(f)=\wsecat^{\topgen_M}(f)$.

We now work with the Ganea version of sectional category to prove that $\csecat^{\topgen_S}(f)=\csecat^{\topgen_M}(f)$. By hypothesis, we may apply Lemma \ref{join-inv2} and Remark \ref{oss-cl-push} to prove inductively that for each $n\ge 0$, the $n$th join $\ast_{Y}^{n} X$ is cofibrant, i.e., it has the homotopy type of a CW-complex. Moreover, the join map $j_{n} \colon \ast_{Y}^{n} X \rightarrow Y$ can be chosen to be the same in both the Str{\o}m and the mixed structure. Then, by  Lemma \ref{weak-sec-uguale-str-mix1}, $j_{n}$ admits a weak section in the Str\o{}m structure if and only if it admits a weak section in the mixed one. Hence, the claim follows.

Next, we see that $\csecat^Q(f)$ is well defined and $\csecat^{\topgen_M}(f)=\csecat^{\topgen_Q}(f)$. For it, consider the adjunction given by the identity functors
\begin{displaymath}
\Id \colon \topgen_Q \leftrightarrows \topgen_M \colon \Id,
\end{displaymath}
 and observe that they satisfy the hypothesis of Proposition \ref{dim-mod-funt} ($\Id \colon \topgen_Q \to \topgen_M$ is the left adjoint). Hence, by Theorem \ref{teor-TC-uguale}, we obtain that for any morphism $g$ in $\topgen$, not necessarily under the hypothesis of the theorem,
$$
\gsecat^{\topgen_M}(g)=\gsecat^{\topgen_Q}(g)\quad\text{and}\quad \wsecat^{\topgen_M}(g)=\wsecat^{\topgen_Q}(g).
$$
Thererefore, with $f$ as in the theorem, $\gsecat^{\topgen_M}(f)=\wsecat^{\topgen_M}(f)$ and the claim follows.  In the pointed case note that there is no difference between $\csecat^{\topgen_M}$ and the one computed in the full subcategory of well pointed topological spaces of the homotopy type of CW-complexes.
\end{proof}
\begin{Osservazione}
We have explicitely shown that, with $f$ as in Theorem \ref{principal},  \ $\csecat^{\topgen_M}(f)$ and $\csecat^{\topgen_Q}(f)$ are well defined as at this point we are not aware of whether the Quillen or mixed structure on $\topgen$ satisfies the cube axiom or not.

On the other hand, if the reader wishes to work only with the Ganea version of sectional category, the locally compactness hypothesis in Theorem \ref{principal} is not necessary.
\end{Osservazione}

\begin{Osservazione}\label{veremos}
As asserted in the Introduction,
$$\secat(f)=\csecat^{\topgen_S}(f),
 $$
 for any map $f\colon X\to Y$ in $\topgen=\topo$ with $Y$ normal \cite[Thm. 2.2]{ferghienkahlvan}. Hence, in this case, and under the hypothesis of the theorem above, this invariant also coincides with $\csecat^{\topgen_Q}$.
Moreover, for path-connected spaces, and as long as $X$ is normal and well pointed in the first equality, and $X\times X$ is normal in the second, we have,
 $$
  \cat(X)=\ccat^{\topgen_S}(X)\quad\text{and}\quad  \tc(X)=\ctc^{\topgen_S}(X).
   $$
   Here, as stated in the Introduction, $\topgen=\topo_*$ for the first equality and $\topgen=\topo$ for the second. In this way, and to avoid excessive notation, whenever any of the classical notions $\secat(f)$, $\cat(X)$ or $\tc(X)$ appears henceforth, we implicitly make the above topological assumptions.
\end{Osservazione}
In view of this remark and  Theorem \ref{principal} we deduce:

 \begin{Corollario}\label{elunico} Let $X$ be a  space of the homotopy type of a locally compact CW-complex. Then,
 $$
 \cat(X)=\ccat^{\topgen_S}(X)=\ccat^{\topgen_Q}(X),\quad \tc(X)=\ctc^{\topgen_S}(X)=\ctc^{\topgen_Q}(X).
 $$
 \hfill$\square$
 \end{Corollario}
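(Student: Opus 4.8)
The plan is to reduce both chains of equalities to Theorem \ref{principal} by recognizing $\ccat$ and $\ctc$ as the abstract sectional categories of specific morphisms, and then to invoke Remark \ref{veremos} to tie the Str\o{}m versions back to the classical invariants.

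First, for the Lusternik--Schnirelmann category I would work in $\topo_*$ and apply Theorem \ref{principal} to the morphism $f\colon *\to X$. By definition $\ccat^{\mathcal C}(X)=\csecat^{\mathcal C}(*\to X)$, and since a point is trivially a locally compact CW-complex while $X$ is of the homotopy type of one by hypothesis, the hypotheses of Theorem \ref{principal} are satisfied. This immediately yields $\ccat^{\topgen_S}(X)=\ccat^{\topgen_Q}(X)$, and combining with the equality $\cat(X)=\ccat^{\topgen_S}(X)$ recorded in Remark \ref{veremos} gives the first chain.

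For the topological complexity I would instead work in $\topo$ and apply Theorem \ref{principal} to the diagonal $\Delta\colon X\to X\times X$, using $\ctc^{\mathcal C}(X)=\csecat^{\mathcal C}(\Delta)$. The only point requiring verification is that both $X$ and $X\times X$ have the homotopy type of a locally compact CW-complex. Since $X$ does by hypothesis, say $X\simeq K$ with $K$ a locally compact (equivalently locally finite) CW-complex, the product $X\times X$ is homotopy equivalent to $K\times K$; and the product of a locally finite CW-complex with any CW-complex carries a CW-structure for the product topology, so $K\times K$ is a CW-complex, which is moreover locally compact as a product of locally compact spaces. Hence $X\times X$ is of the homotopy type of a locally compact CW-complex and Theorem \ref{principal} applies, giving $\ctc^{\topgen_S}(X)=\ctc^{\topgen_Q}(X)$; Remark \ref{veremos} then supplies $\tc(X)=\ctc^{\topgen_S}(X)$, completing the second chain.

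I expect the only genuine subtlety to be the CW-structure on $X\times X$: the product of two CW-complexes need not be a CW-complex in general, and it is precisely the local compactness (equivalently, local finiteness) of the model $K$ that guarantees $K\times K$ with the product topology is again a CW-complex. This is why the local-compactness hypothesis, already present in Theorem \ref{principal}, is indispensable for the $\tc$ statement and cannot simply be dropped.
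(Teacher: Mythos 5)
Your proposal is correct and follows exactly the route the paper intends: the corollary is stated there with no written proof beyond the phrase ``In view of this remark and Theorem \ref{principal} we deduce,'' i.e.\ apply Theorem \ref{principal} to $*\to X$ and to $\Delta\colon X\to X\times X$ and combine with Remark \ref{veremos}. Your extra verification that $X\times X$ has the homotopy type of a locally compact CW-complex (via local finiteness of the model $K$, so that $K\times K$ with the product topology is again a locally compact CW-complex) is a point the paper leaves implicit, and it is a worthwhile addition rather than a deviation.
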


\begin{Esempio}\label{contro-ex}
In general, $\csecat^{\topgen_S}$ and $\csecat^{\topgen_Q}$  are different invariants. Indeed, let $X$ be a path-connected, non contractible space which has trivial homotopy groups (e.g. the Warsaw circle). Hence, both  $\Delta\colon X\to X\times X$ and $*\to X$ are  weakly equivalent to the trivial morphisms $\ast \rightarrow \ast \times \ast$, $*\to*$, and therefore $\ccat^{\topgen_Q}(X)=\ctc^{\topgen_Q}(X)=0$. On the other hand, as $X$ is non contractible,  $\ccat^{\topgen_S}(X),\ctc^{\topgen_S}(X)\ge  1$.
\end{Esempio}

As an application we finish by relating these invariants with the corresponding ones in the simplical category. Denote by $\simpset$ the category of either free or pointed simplicial sets endowed with the proper model structure in which the $fib=$Kan fibrations, $cof=$injective maps and $we=$homotopy equivalences of simplicial maps \cite{qui}. Consider the classical {\em realization} and {\em singular} pair of adjoint equivalences,
\begin{displaymath}
\mid \cdot \mid  \colon \simpset \leftrightarrows \topgen \colon \singular,
\end{displaymath}
where the  $\singular$ is the right adjoint.

\begin{Proposizione} For any morphism $f\in\topgen$,
$$
\gsecat^{\topgen_Q}(f)=\gsecat^{\simpset} \bigl(\singular(f)\bigr),\quad \wsecat^{\topgen_Q}(f)=\wsecat^{\simpset} \bigl(\singular(f)\bigr).
$$
\end{Proposizione}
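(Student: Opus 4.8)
The plan is to verify that the realization--singular adjunction
$\mid\cdot\mid\colon\simpset\leftrightarrows\topgen_Q\colon\singular$
satisfies the hypotheses of Proposition \ref{dim-mod-funt}, so that both functors are modelization functors, and then to apply Theorem \ref{teor-TC-uguale}. First I would recall the standard fact (going back to Quillen \cite{qui}) that this adjunction is in fact a Quillen equivalence between the standard model structures on $\simpset$ and $\topgen_Q$: the realization $\mid\cdot\mid$ is a left Quillen functor, so it preserves cofibrations (and trivial cofibrations), while $\singular$ is a right Quillen functor, so it preserves fibrations (and trivial fibrations). This gives hypothesis (ii) directly.

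Next I would check hypothesis (i), that both functors preserve weak equivalences. For $\singular$ this is immediate, since a weak homotopy equivalence $f$ induces by definition an isomorphism on all homotopy groups, which is exactly the condition that $\singular(f)$ be a weak equivalence of simplicial sets. For $\mid\cdot\mid$ one uses that geometric realization sends simplicial weak equivalences to weak homotopy equivalences of spaces, a classical property of the realization functor. Hypothesis (iii) is precisely the statement that the unit $K\to\singular\mid K\mid$ and the counit $\mid\singular X\mid\to X$ are weak equivalences for every simplicial set $K$ and every space $X$; this is the content of the Milnor equivalence (referenced in the Introduction) and is equivalent to the assertion that the adjunction is a Quillen equivalence. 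With (i)--(iii) in hand, Proposition \ref{dim-mod-funt} yields that $\mid\cdot\mid$ and $\singular$ are modelization functors.

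Finally, to invoke Theorem \ref{teor-TC-uguale} with $\mathcal C=\topgen_Q$, $\mathcal D=\simpset$, $\mu=\singular$ and $\nu=\mid\cdot\mid$, I must check the remaining hypothesis that $\nu\bigl(\mu(f)\bigr)=\mid\singular(f)\mid$ is weakly equivalent to $f$ in the morphism category. This follows from the naturality of the counit: the counit components $\mid\singular X\mid\xrightarrow{\sim}X$ and $\mid\singular Y\mid\xrightarrow{\sim}Y$ are weak equivalences by (iii) and fit into a commutative square with $\mid\singular(f)\mid$ on one side and $f$ on the other, exhibiting the required weak equivalence of morphisms. Theorem \ref{teor-TC-uguale} then gives both
$\gsecat^{\topgen_Q}(f)=\gsecat^{\simpset}\bigl(\singular(f)\bigr)$
and
$\wsecat^{\topgen_Q}(f)=\wsecat^{\simpset}\bigl(\singular(f)\bigr)$,
as desired.

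The only genuine subtlety, rather than a true obstacle, is bookkeeping the pointed versus free distinction: in the free setting one needs the unpointed version of Theorem \ref{teor-TC-uguale} (as flagged in the Remark following it), which additionally requires the modelization functors to preserve the terminal object. I would note that $\singular(\ast)$ is the terminal simplicial set and $\mid\cdot\mid$ preserves the terminal object up to the relevant identification, so this extra condition holds and the argument goes through uniformly in both settings.
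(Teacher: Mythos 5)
Your proposal is correct and follows exactly the route the paper takes: the paper's proof is the single observation that $\mid\cdot\mid$ and $\singular$ are modelization functors (via Proposition \ref{dim-mod-funt}) satisfying the hypotheses of Theorem \ref{teor-TC-uguale}, and you have simply spelled out the verification of hypotheses (i)--(iii), the weak equivalence $\mid\singular(f)\mid\simeq f$ via the counit, and the pointed/free bookkeeping. Nothing in your elaboration deviates from or adds to the paper's argument beyond making the ``trivially'' explicit.
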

\begin{proof} Simply observe that, when choosing the Quillen structure $\topgen_Q$, the adjoint functors $\mid \cdot \mid$ and $\singular$ trivially are modelization functors which satisfy the hypothesis of Theorem \ref{teor-TC-uguale}.
\end{proof}

Hence, applying directly Theorem \ref{principal} and Corollary \ref{elunico} we obtain.

\begin{Teorema}\label{ultimo}
Let $f$ be a map  between spaces of the homotopy type of locally compact CW-complexes. Then,
$\csecat^{\simpset} \bigl(\singular(f)\bigr)$ is well defined and
$$
\secat(f)=\csecat^{\simpset} \bigl(\singular(f)\bigr).
$$
Moreover, for any space $X$ of the homotopy type of a locally compact CW-complex
$$
 \cat(X)=\ccat^{\simpset}\bigl(\singular(X)\bigr)\quad\text{and}\quad \tc(X)=\ctc^{\simpset}\bigl(\singular(X)\bigr).
 $$
\hfill$\square$
\end{Teorema}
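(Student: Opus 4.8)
The plan is to obtain the statement by chaining the three results from which it is assembled: the Proposition immediately preceding it (transport of $\gsecat$ and $\wsecat$ from $\topgen_Q$ to $\simpset$ along $\singular$), Theorem \ref{principal} (equality of the Quillen and Str{\o}m invariants), and Remark \ref{veremos} (identification of the Str{\o}m invariant with the classical ones). Since $f$ runs between locally compact CW-type spaces, everything in Theorem \ref{principal} applies, so the whole argument is bookkeeping with no new homotopy-theoretic content.

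First I would settle that $\csecat^{\simpset}(\singular(f))$ is well defined, i.e. that its Ganea and Whitehead versions agree. Rather than deciding whether $\simpset$ itself satisfies the cube axiom, I would import the equality from $\topgen_Q$: Theorem \ref{principal} gives $\gsecat^{\topgen_Q}(f)=\wsecat^{\topgen_Q}(f)$, and feeding both sides through the preceding Proposition yields
$$
\gsecat^{\simpset}(\singular(f))=\gsecat^{\topgen_Q}(f)=\wsecat^{\topgen_Q}(f)=\wsecat^{\simpset}(\singular(f)).
$$
With well-definedness in hand, the first equality follows by stringing together the same three inputs,
$$
\secat(f)=\csecat^{\topgen_S}(f)=\csecat^{\topgen_Q}(f)=\csecat^{\simpset}(\singular(f)),
$$
the normality of $Y$ required for the first step being the standing convention of Remark \ref{veremos} (in the free setting $\topgen=\topo$).

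For the $\cat$ and $\tc$ assertions I would rewrite each as $\csecat$ of a distinguished morphism and verify that $\singular$ carries that morphism to its simplicial analogue. For $\cat$, working in $\topo_*$ where the point is the initial object, the identification $\singular(*)=*$ gives $\singular(*\to X)=(*\to\singular(X))$, so that $\ccat^{\simpset}(\singular(X))=\csecat^{\simpset}(\singular(*\to X))=\csecat^{\topgen_Q}(*\to X)=\ccat^{\topgen_Q}(X)$, and Corollary \ref{elunico} closes the loop with $\ccat^{\topgen_Q}(X)=\cat(X)$. For $\tc$, working in $\topo$ and using that $\singular$ is a right adjoint, hence product-preserving, one has $\singular(X\times X)\cong\singular(X)\times\singular(X)$ and $\singular(\Delta_X)=\Delta_{\singular(X)}$; therefore $\ctc^{\simpset}(\singular(X))=\csecat^{\simpset}(\singular(\Delta_X))=\csecat^{\topgen_Q}(\Delta_X)=\ctc^{\topgen_Q}(X)=\tc(X)$, again via the Proposition and Corollary \ref{elunico}. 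Here one invokes the unpointed form of Theorem \ref{teor-TC-uguale}, legitimate since $\singular$ also preserves the terminal object.

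The one point that needs genuine care, as opposed to pure transcription, is the compatibility of $\singular$ with the distinguished morphisms: that it sends the diagonal to the diagonal and the basepoint inclusion to the basepoint inclusion. Both follow from $\singular$ being a right adjoint (so limit-preserving, in particular preserving products and the terminal object) together with $\singular(*)=*$; once these identifications are recorded, nothing beyond the three cited results is needed.
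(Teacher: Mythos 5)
Your proposal is correct and follows essentially the same route as the paper, which likewise deduces the theorem by directly combining the preceding Proposition (transport along $\singular$), Theorem \ref{principal}, and Corollary \ref{elunico}, with Remark \ref{veremos} supplying the identification with the classical invariants. Your extra care in checking that $\singular$ carries the diagonal and the basepoint inclusion to their simplicial analogues, and in deriving well-definedness by transporting $\gsecat=\wsecat$ from $\topgen_Q$, only makes explicit what the paper leaves implicit.
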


\bigskip

\noindent Dipartimento di Matematica,\\ Universit\`a di Pisa,\\
Largo B. Pontecorvo 5,
56127 Pisa, Italy.\\
\textit{E-mail address}: \textbf{moraschini@mail.dm.unipi.it}

\bigskip
\noindent Departamento de  Geometr{\'\i}a y Topolog{\'\i}a,\\
Universidad de M\'alaga,\\ Ap. 59,
29080 M\'alaga, Spain.\\
\textit{E-mail address}: \textbf{aniceto@uma.es}
\end{document}